\newtheorem{theorem}{Theorem}
\newtheorem{prop}[theorem]{Proposition}
\newtheorem{lemma}[theorem]{Lemma}
\theoremstyle{definition}
\newtheorem{conj}{Conjecture}
\newcommand*{\abs}[1]{\lvert #1\rvert}
\newcommand*{\floor}[1]{\lfloor #1\rfloor}
\newcommand*{\ceil}[1]{\lceil #1\rceil}
\newcommand{\ie}{i.e.\ }
\title{Degrees in link graphs of regular graphs}
\author{Itai Benjamini and John Haslegrave}
\begin{document}
\maketitle
\begin{abstract}We analyse an extremal question on the degrees of the link graphs of a finite regular graph, that is, the subgraphs induced by non-trivial spheres.  We show that if $G$ is $d$-regular and connected but not complete then some link graph of $G$ has minimum degree at most $\floor{2d/3}-1$, and if $G$ is sufficiently large in terms of $d$ then some link graph has minimum degree at most $\floor{d/2}-1$; both bounds are best possible. We also give the corresponding best-possible result for the corresponding problem where subgraphs induced by balls, rather than spheres, are considered.

We motivate these questions by posing a conjecture concerning expansion of link graphs in large bounded-degree graphs, together with a heuristic justification thereof.\end{abstract}

\section{Link graphs with large degrees}\label{sec:sphere}

For a graph $G$, radius $r>0$ and vertex $v$, the sphere of radius $r$ about $v$, which we denote $\mathcal{S}_r(v)$, is the set of vertices at distance exactly $r$ from $v$, \ie $\mathcal{S}_r(v):=\{w\in V(G):d(v,w)=r\}$. If $\mathcal{S}_r(v)\neq\varnothing$, the $r$-link graph of $v$, which we denote $\mathcal{L}_r(v)$, is the induced subgraph $G[\mathcal{S}_r(v)]$. The graph $\mathcal{L}_1(e)$, where $G$ is a Cayley graph of a finitely-generated group, was used in \cite{Z} to give a sufficient condition for property (T).

Suppose that $G$ is a $d$-regular graph other than $K_{d+1}$. Can we choose $G$ in such a way as to ensure that each of the link graphs has large minimum degree? In other words, we wish to find the maximum value of $\min_{v,r}\delta(\mathcal{L}_r(v))$ over all non-complete $d$-regular graphs.

\begin{theorem}\label{thm:sphere}If $G$ is a non-complete connected $d$-regular graph then
\[\min_{v,r}\delta(\mathcal{L}_r(v))\leq\floor{2d/3}-1,\]
and this bound can be attained for any $d\equiv 2\pmod 3$.
\end{theorem}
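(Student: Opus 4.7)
My plan is to prove the upper bound by contradiction via a short double-counting argument, then exhibit a matching construction. Suppose that $\delta(\mathcal{L}_r(v')) \geq \floor{2d/3}$ for every vertex $v'$ and every $r \geq 1$ with $\mathcal{S}_r(v') \neq \varnothing$. Since $G$ is connected and non-complete, along a shortest path between some non-adjacent pair we find vertices $v, u$ at distance exactly $2$; fix such $v, u$ together with a common neighbour $w$. I shall bound $|N(v) \cap N(u) \cap N(w)|$ in two opposite directions.

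For the lower bound I use $\mathcal{L}_1(w)$: both $v$ and $u$ lie in $N(w)$ and are non-adjacent, and each has at least $\floor{2d/3}$ neighbours in $\mathcal{L}_1(w)$ by hypothesis. Since those neighbourhoods are subsets of the $(d-2)$-element set $N(w) \setminus \{v,u\}$, inclusion--exclusion gives
\[|N(v) \cap N(u) \cap N(w)| \geq 2\floor{2d/3} - d + 2.\]

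For the upper bound I use $\mathcal{L}_2(v)$: by hypothesis at least $\floor{2d/3}$ of $u$'s neighbours lie in $\mathcal{S}_2(v)$, so at most $\lceil d/3 \rceil$ of them lie elsewhere; in particular $|N(v) \cap N(u)| \leq \lceil d/3 \rceil$. Since $w \in N(v) \cap N(u)$ while $w \notin N(w)$,
\[|N(v) \cap N(u) \cap N(w)| \leq \lceil d/3 \rceil - 1.\]
Comparing the two estimates yields $2\floor{2d/3} + 3 \leq d + \lceil d/3 \rceil$, which a routine check in each residue class of $d$ modulo $3$ shows to fail for every $d \geq 2$. This contradiction is the heart of the proof, and essentially the only care required anywhere is this final residue check.

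For the tight construction when $d = 3k+2$, I would take the lexicographic product $G = C_5[K_{k+1}]$, whose vertex set is $\mathbb{Z}_5 \times [k+1]$ with $(i,\alpha) \sim (j,\beta)$ iff $i - j \equiv \pm 1 \pmod 5$, or $i = j$ and $\alpha \neq \beta$. This graph is $d$-regular, connected, non-complete and of diameter $2$. A direct check shows that for any vertex $v$ the link $\mathcal{L}_1(v)$ has vertex set partitioned into three cliques of sizes $k$, $k+1$, $k+1$ (one for each of the cycle positions $0, 1, -1$), with the $k$-clique joined completely to each of the other two but no edges between the two $(k+1)$-cliques, so that $\delta(\mathcal{L}_1(v)) = 2k$ (attained on the two larger blocks); meanwhile $\mathcal{L}_2(v) \cong K_{2(k+1)}$, with minimum degree $2k+1$. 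Hence $\min_{v', r}\delta(\mathcal{L}_r(v')) = 2k = \floor{2d/3} - 1$, showing the bound is tight.
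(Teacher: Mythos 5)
Your proof is correct and takes essentially the same approach as the paper: the extremal example is the identical $C_5$ blow-up (just parametrised with cliques of order $k+1$ for $d=3k+2$ rather than order $k$ for $d=3k-1$), and your upper bound is the paper's counting argument along a path $v$--$w$--$u$ with $d(v,u)=2$, merely repackaged as a double count of $\abs{N(v)\cap N(w)\cap N(u)}$ (your two estimates combine to the same inequality $3m+3-d\leq d$). The only cosmetic difference is that you read off the lower bound on that triple intersection from $\mathcal{L}_1(w)$ rather than from $\mathcal{L}_1(v)$ and $\mathcal{L}_1(u)$, which gives the same numbers by symmetry of common neighbourhoods.
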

\begin{proof}We verify the second statement first. Fix $d=3k-1$, so that $\floor{2d/3}-1=2k-2$, and let $G$ be the graph obtained by blowing up each vertex of $C_5$ to a clique of order $k$. Then $G$ is $(3k-1)$-regular and vertex-transitive with diameter $2$. For each $v$, the link graph $\mathcal{L}_1(v)$ consists of three cliques of order $k$, $k-1$ and $k$, with all vertices between the clique of order $k-1$ and the other two cliques, so has minimum degree $2k-2$. Furthermore, $\mathcal{L}_2(v)\cong K_{2k}$ and has minimum degree $2k-1$.

Next we prove the upper bound. Let $G$ be any non-complete $d$-regular graph, and note that this implies $G$ has no universal vertex. Suppose that every link graph of $G$ has minimum degree at least $m$. For any vertex $v$, any vertex in $\mathcal{S}_1(v)$ has at least $m$ neighbours within $\mathcal{S}_1(v)$ (and is a neighbour of $v$), so has at most $d-m-1$ neighbours in $\mathcal{S}_2(v)$. Choose $x\in\mathcal{S}_2(v)$ and $w\in\mathcal{S}_1(v)\cap\mathcal{S}_1(x)$. Now we must have $\deg_{\mathcal{L}_1(x)}(w)\geq m$. Since $w$ has at most $d-m-1$ neighbours in $\mathcal{S}_2(v)$, one of which is $x$, it has at most $d-m-2$ neighbours in $\mathcal{S}_2(v)\cap\mathcal{S}_1(x)$. Consequently, $w$ has at least $m-(d-m-2)=2m+2-d$ neighbours in $\mathcal{S}_1(v)\cap\mathcal{S}_1(x)$, and it follows that $x$ has at least $2m+3-d$ neighbours in $\mathcal{S}_1(v)$. However, considering $\deg_{\mathcal{L}_2(v)}(x)$, it also has at least $m$ neighbours in $\mathcal{S}_2(v)$. Since $\deg_G(x)=d$, we must have $3m+3-d\leq d$, \ie $m\leq 2d/3 -1$ and $m$ is an integer.
\end{proof}

Note that the example constructed to show that the bound of Theorem \ref{thm:sphere} is tight is small relative to $d$, and has diameter $2$. We can do better if we exclude such examples.

\begin{theorem}\label{thm:sphere-diam}For each $r\geq 3$, if $G$ is a connected $d$-regular graph with diameter at least $r$ then \[\min_{v,r}\delta(\mathcal{L}_r(v))\leq\floor{rd/(2r-1)}-1.\]\end{theorem}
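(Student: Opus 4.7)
The plan is to generalise and iterate the argument of Theorem~\ref{thm:sphere} along a geodesic path of length $r$. First, I would observe that the proof of Theorem~\ref{thm:sphere} in fact establishes a slightly stronger statement, with no change to the counting: for any vertex $v$ and any $x\in\mathcal{S}_s(v)$ with $s\geq 2$, starting with any $w\in N(x)\cap\mathcal{S}_{s-1}(v)$ (such a $w$ exists on a shortest path from $v$ to $x$) and reprising the counting verbatim yields
\[|N(x)\cap\mathcal{S}_{s-1}(v)|\geq 2m+3-d,\]
where $m$ denotes the assumed minimum link-graph degree.

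Next, I would fix a geodesic $v_0,v_1,\dots,v_r$ (which exists because $G$ has diameter at least $r$) and apply this general statement in both directions along the path. Using $v_i\in\mathcal{S}_i(v_0)$ with $i\geq 2$ gives $|N(v_i)\cap\mathcal{S}_{i-1}(v_0)|\geq 2m+3-d$. Symmetrically, using $v_i\in\mathcal{S}_{r-i}(v_r)$ with $r-i\geq 2$ gives $|N(v_i)\cap\mathcal{S}_{r-i-1}(v_r)|\geq 2m+3-d$; a triangle inequality argument (any $u$ adjacent to $v_i$ with $d(u,v_r)=r-i-1$ must satisfy $d(u,v_0)=i+1$) translates this into $|N(v_i)\cap\mathcal{S}_{i+1}(v_0)|\geq 2m+3-d$. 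Combined with the direct link bound $|N(v_i)\cap\mathcal{S}_i(v_0)|\geq m$, these three inequalities constrain the distribution of each $v_i$'s neighbours among the spheres around $v_0$.

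The main obstacle is combining these per-vertex constraints into the single sharp inequality $(2r-1)(m+1)\leq rd$, which is equivalent to the stated bound. Summing the three bounds at a single interior vertex yields only $5m+6\leq 3d$, which recovers Theorem~\ref{thm:sphere}'s constant but falls short for larger $r$. To obtain the correct constant $r/(2r-1)$ I expect to need to iterate the generalised argument across successive sphere layers, propagating the $2m+3-d$ bound from $\mathcal{S}_{s-1}(v)$ down to $\mathcal{S}_{s-2}(v)$ and beyond through the common neighbour structure, and then sum carefully along the whole geodesic while avoiding double counting between adjacent path vertices. The case $r=3$ will be the most delicate, since no index $i$ satisfies both $i\geq 2$ and $r-i\geq 2$; here one will need to use the endpoints $v_0$ and $v_r$ symmetrically together with the common neighbours of the adjacent pairs $(v_0,v_1),(v_1,v_2),(v_2,v_3)$ to force the inequality.
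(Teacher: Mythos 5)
Your opening observation is correct and matches the paper's starting point: the counting in the proof of Theorem \ref{thm:sphere} does generalise verbatim to show that any $x\in\mathcal{S}_s(v)$ with $s\geq 2$ has at least $2m+3-d$ neighbours in $\mathcal{S}_{s-1}(v)$. But the argument you build on it has a genuine gap, which you half-acknowledge yourself. The constraints you collect at a vertex $v_i$ of the geodesic --- at least $2m+3-d$ neighbours one sphere back, at least $m$ in the same sphere, at least $2m+3-d$ one sphere forward --- are all local to the $d$ edges at that one vertex, so combining them yields at best $5m+6\leq 3d$, and summing such inequalities over different vertices of the geodesic produces nothing beyond the strongest single-vertex inequality. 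For $r\geq 4$ and $d$ large, $m\leq (3d-6)/5$ is strictly weaker than the target $m\leq rd/(2r-1)-1$, so the plan as described cannot close, and the two-sided use of the geodesic is a detour rather than the missing ingredient.

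The missing idea is that the backward-degree bound must be \emph{strengthened inductively as one moves outward}, not applied uniformly to every sphere: the paper proves by induction on $j$ that every vertex of $\mathcal{S}_j(v)$ has at least $(j-1)(2m+2-d)+1$ neighbours in $\mathcal{S}_{j-1}(v)$. The inductive step is exactly your generalised counting, except that the budget for the forward neighbours of the intermediate vertex $w\in\mathcal{S}_{j-1}(v)\cap\mathcal{S}_1(x)$ is reduced not merely by $m+1$ (as in the base case $j=2$) but by $m$ plus the inductively known number of neighbours of $w$ in $\mathcal{S}_{j-2}(v)$; this is what makes the bound grow linearly in $j$ and is the sole source of the constant $r/(2r-1)$. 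Applying the case $j=r$ to one vertex of a nonempty sphere $\mathcal{S}_r(v)$, together with its $m$ neighbours inside that sphere, gives $(2r-1)m-(r-1)d+2r-1\leq d$ and hence the claimed bound. Your phrase about ``propagating the $2m+3-d$ bound from $\mathcal{S}_{s-1}(v)$ down to $\mathcal{S}_{s-2}(v)$'' gestures at this, but the propagation must run outward with an improving quantitative hypothesis, which you have not formulated; once it is in place, no summation along a geodesic and no special treatment of $r=3$ is needed.
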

\begin{proof}Suppose $G$ is such a graph and every link graph has minimum degree at least $m$. Fix a vertex $v$ such that $\mathcal{S}_r(v)\neq\varnothing$. We claim that for each $2\leq j\leq r$ every vertex in $\mathcal{S}_j(v)$ has at least $(2j-2)m-(j-1)d+2j-1$ neighbours in $\mathcal{S}_{j-1}(v)$. The result will follow from the case $j=r$ of the claim, since any vertex in $\mathcal{S}_r(v)$ also has at least $m$ neighbours in $\mathcal{S}_r(v)$, and so $(2j-1)m-(j-1)d+2j-1\leq d$, giving $m\leq jd/(2j-1)-1$.

We prove the claim by induction. The case $j=2$ was shown in the proof of Theorem \ref{thm:sphere}. For $j>2$, let $x$ be a vertex in $\mathcal{S}_j(v)$ and $w$ be a vertex in $\mathcal{S}_{j-1}(v)\cap\mathcal{S}_{1}(x)$. Since $w$ has at least $(2j-4)m-(j-2)d+2j-3$ neighbours in $\mathcal{S}_{j-2}(v)$ by the induction hypothesis, and at least $m$ neighbours in $\mathcal{S}_{j-1}(v)$, it has at most $(j-1)d-(2j-3)m-2j+2$ neighbours in $\mathcal{S}_j(v)\setminus\{x\}$. Since $w$ has at least $m$ neighbours in $\mathcal{S}_1(x)$, it must have at least $(2j-2)m-(j-1)d+2j-2$ neighbours in $\mathcal{S}_1(x)\cap\mathcal{S}_{j-1}(v)$, so $\abs{\mathcal{S}_1(x)\cap\mathcal{S}_{j-1}(v)}\geq (2j-2)m-(j-1)d+2j-1$, as required.
\end{proof}
From Theorem \ref{thm:sphere-diam} it follows that if $\operatorname{diam}(G)$ is sufficiently large in terms of $d$ (in particular, is at least $(d+1)/2$), and consequently whenever $\abs{G}$ is sufficiently large in terms of $d$, then \[\min_{v,r}\delta(\mathcal{L}_r(v))\leq\floor{d/2}-1.\]
In fact this bound is best possible.
\begin{prop}\label{cycle-power}For each even $d$ there exist connected $d$-regular graphs of arbitrarily large diameter satisfying $\min_{v,r}\delta(\mathcal{L}_r(v))=d/2-1$.\end{prop}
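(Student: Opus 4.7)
The plan is to take $G = C_n^k$, the $k$-th power of a cycle of length $n$, with $d = 2k$ and $n = 2sk$ for some integer $s \geq 2$.  This $G$ is $d$-regular, vertex-transitive, and has diameter $s$, which can be made arbitrarily large.  Identifying $V(G)$ with $\mathbb{Z}/n\mathbb{Z}$ and taking $v = 0$, the graph distance from $v$ to the vertex at cyclic distance $p$ is $\lceil p/k \rceil$, so for $1 \leq r \leq s - 1$ the sphere $\mathcal{S}_r(v)$ consists of the $2k$ vertices at cyclic distances $(r-1)k + 1, \ldots, rk$ on each side of $v$, while $\mathcal{S}_s(v)$ is the analogous set of $2k - 1$ vertices, with the two sides meeting at the antipode $p = sk$.

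First I would handle the ``generic'' radii.  The $k$ vertices on each side of $\mathcal{S}_r(v)$ have pairwise cyclic distances at most $k - 1$ and so span a $k$-clique.  For $2 \leq r \leq s - 1$, cyclic distances between left- and right-side vertices of $\mathcal{S}_r(v)$ are at least $2(r-1)k + 2 > k$ going the short way round and at least $2(s-r)k \geq 2k$ going the long way, so there are no cross-edges and $\mathcal{L}_r(v)$ is a disjoint union of two $k$-cliques, with minimum degree $k - 1 = d/2 - 1$.  For $r = 1$ the two sides again span $k$-cliques, but now right position $i$ and left position $n - j$ are adjacent iff $i + j \leq k$; the vertex at position $k$ therefore has no cross-neighbours and has degree exactly $k - 1$ in $\mathcal{L}_1(v)$.

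The one case needing real care is $r = s$, where the two sides meet at the antipode.  Here the key fact is that a non-antipodal right vertex at position $p$ and a non-antipodal left vertex at position $sk + j$ are adjacent in $\mathcal{L}_s(v)$ iff $p \geq (s-1)k + j$; pushing this through, the antipode has degree $2k - 2$ and every non-antipodal vertex turns out to have degree at least $k$, so $\delta(\mathcal{L}_s(v)) = k > d/2 - 1$.  Combining these, $\min_{v,r}\delta(\mathcal{L}_r(v)) = k - 1 = d/2 - 1$, witnessed for instance by $\mathcal{L}_1(v)$, and since $s$ may be chosen arbitrarily large this yields graphs of arbitrarily large diameter.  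The main obstacle is the $r = s$ analysis: one has to verify that the wrap-around at the antipode does not accidentally produce a vertex of smaller degree in the maximal sphere; everything else is a routine distance computation on the cycle.
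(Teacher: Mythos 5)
Your construction is the same as the paper's -- the $k$-th power of a cycle -- and your explicit choice $n=2sk$ is a special case of the paper's slightly more general condition ($n-1\equiv a\pmod{2k}$ with $k\leq a\leq 2k$), which is exactly what is needed to make the wrap-around sphere $\mathcal{S}_s(v)$ a run of at least $k$ consecutive vertices and hence of minimum degree at least $k-1$; your distance computations and the resulting description of the links check out. The only blemish is the degenerate case $k=1$, where $\mathcal{S}_s(v)$ is a single vertex so $\delta(\mathcal{L}_s(v))=0$ rather than $k$, but the claimed equality $\min_{v,r}\delta(\mathcal{L}_r(v))=d/2-1=0$ still holds trivially there.
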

\begin{proof}Set $k=d/2$ and choose any $n$ satisfying $n-1\equiv a\pmod{2k}$ for some $a\in\{k,\ldots,2k\}$. Consider the $k$th power of the $n$-cycle, $C_n^k$.  This is $2k$-regular, and has diameter $s:=\ceil{(n-1)/(2k)}$, which may be made arbitrarily large. Every link graph of radius strictly smaller than $s$ consists of two cliques of order $k$ with possibly some edges in between, so has minimum degree at least $k-1$. A sphere of radius $s$ is a set of $b:=n-1-2k(s-1)$ consecutive vertices; since this value is congruent to $n-1$ modulo $2k$, positive and at most $2k$, we have $b=a\geq k$. Consequently $\mathcal{L}_s(v)$ will have minimum degree $k-1$.
\end{proof}

In this example, the diameter grows linearly with the order of the graph. It is natural to ask about large graphs which are well-connected, having diameter growing logarithmically with their order. In this case it is still possible for all link degrees to be linear in $d$. For example, start from a large cubic graph with logarithmic diameter and blow up each vertex to a clique of order $(d+1)/4$, where $d$ is fixed. The graph obtained is $d$-regular, and retains logarithmic diameter. Each link graph consists of some cliques, possibly with edges between them, and has minimum degree at least $(d-3)/4$. However, we do not know whether the constant $1/4$ in this example can be improved.

Additionally, in the example of Proposition \ref{cycle-power}, almost all links are $(d/2-1)$-regular. We might ask whether this is a necessary feature; in particular, do there exist $d$-regular graphs of large diameter for which all links have average degree greater than $d/2$? Note that in general there exist examples where all links have average degree significantly higher than the minimum degree over all links. For example, take the Cartesian product of a triangle and a long odd cycle, and blow up all vertices to cliques of order $(d+1)/5$. The average degree of each link is at least $(3d-7)/10$, but most links have minimum degree $(d-4)/5$. However, we do not know of a similar example where the minimum degree is close to $d/2$.

\section{Induced subgraphs on balls}\label{sec:ball}

In this section we consider a natural extension, replacing link graphs with the subgraphs induced by balls, \ie we consider induced subgraphs of the form $\overline{\mathcal{L}}_r(v):=G[\mathcal{B}_r(v)]$ where $\mathcal{B}_r(v):=\{w\in V(G):d(v,w)\leq r\}$. In this case we can easily have all minimum degrees of such graphs close to $d$ by taking $G$ to have only slightly more than $d+1$ vertices; for example, if $d$ is even then taking $G$ to be $K_{d+2}$ minus a perfect matching ensures $\min_{v,r}\delta(\overline{\mathcal{L}}_r(v))=d-2$.

However, again such graphs have diameter $2$, and if we require greater diameter we obtain non-trivial (and in fact tight) bounds.
\begin{theorem}\label{thm:ball}If $G$ is a connected $d$-regular graph with diameter at least $3$ then
\[\min_{v,r}\delta(\overline{\mathcal{L}}_r(v))\leq\floor{(2d-1)/3},\]
and this bound can be attained for any $d\equiv 2\pmod 3$ and arbitrarily large diameter.
\end{theorem}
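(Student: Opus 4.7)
The plan is to mimic the double-counting in the proof of Theorem \ref{thm:sphere} and squeeze out one extra unit from the fact that the pivot vertex $y$ at distance $3$ is itself a neighbour of $x$ that is \emph{not} in $\mathcal{S}_1(y)$. I would begin by rewriting the ball hypothesis as a forward-degree bound: if $\delta(\overline{\mathcal{L}}_r(v))\geq m$ then every $u\in\mathcal{S}_r(v)$ satisfies $|N(u)\cap\mathcal{S}_{r+1}(v)|\leq d-m$, since all other neighbours of $u$ lie in $\mathcal{B}_r(v)$. Specialising gives $|N(w)\cap\mathcal{S}_1(v)|\geq m-1$ for $w\in\mathcal{S}_1(v)$ and $|N(x)\cap(\mathcal{S}_1(v)\cup\mathcal{S}_2(v))|\geq m$ for $x\in\mathcal{S}_2(v)$.

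Using the diameter hypothesis I would fix $v$ with $\mathcal{S}_3(v)\neq\varnothing$, pick $y\in\mathcal{S}_3(v)$, $x\in\mathcal{S}_2(v)\cap\mathcal{S}_1(y)$ and $w\in\mathcal{S}_1(v)\cap\mathcal{S}_1(x)$, and let $A,B,C$ denote the numbers of neighbours of $x$ in $\mathcal{S}_1(v),\mathcal{S}_2(v),\mathcal{S}_3(v)$ respectively, so $A+B+C=d$. Following the same pattern as Theorem \ref{thm:sphere} --- combining $|N(w)\cap\mathcal{S}_2(v)\setminus\{x\}|\leq d-m-1$ with the $r=1$ ball condition at $x$ (giving $|N(w)\cap N(x)|\geq m-1$), using $v\notin N(x)$, and finally adding $w$ itself --- yields the first estimate $A\geq 2m-d+1$.

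The second estimate, on $B+C$, is where the argument must depart slightly from Theorem \ref{thm:sphere} in order to reach the sharper constant $(2d-1)/3$. The $r=1$ ball condition at $y$ gives $|N(x)\cap\mathcal{S}_1(y)|\geq m-1$; since $N(x)\subseteq\mathcal{S}_1(v)\cup\mathcal{S}_2(v)\cup\mathcal{S}_3(v)$ and $\mathcal{S}_1(y)\subseteq\mathcal{S}_2(v)\cup\mathcal{S}_3(v)\cup\mathcal{S}_4(v)$, these neighbours all lie in $\mathcal{S}_2(v)\cup\mathcal{S}_3(v)$; and $y$ itself is a further neighbour of $x$ in $\mathcal{S}_3(v)$ that does not belong to $\mathcal{S}_1(y)$. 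Hence $B+C\geq m$, and adding to the first estimate gives $d\geq 3m-d+1$, \ie $m\leq(2d-1)/3$; as $m$ is an integer this is the stated bound. I expect this extra unit from $y\notin\mathcal{S}_1(y)$ to be the only genuinely non-routine step: without it one recovers only $m\leq 2d/3$, which is weaker whenever $3\mid d$.

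For the matching example, writing $d=3s-1$, I would take any sufficiently large $n$ and blow up each vertex of $C_n$ into a copy of $K_s$, keeping all edges inside each copy and all edges between copies indexed by consecutive vertices of $C_n$. This is a $d$-regular graph with diameter $\floor{n/2}$. For $v$ in the clique $V_0$ and $1\leq r<n/2$, every vertex of $\mathcal{B}_{r-1}(v)$ has full degree $d$ in $\overline{\mathcal{L}}_r(v)$, while every vertex of the outer sphere $V_r\cup V_{-r}$ loses exactly the $s$ neighbours belonging to the next clique outside and so has degree $2s-1=\floor{(2d-1)/3}$; this matches the upper bound for arbitrarily large diameter.
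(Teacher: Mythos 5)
Your proposal is correct and follows essentially the same route as the paper: the same blow-up of $C_n$ by cliques of order $(d+1)/3$ for the extremal example, and the same path $vwxy$ argument for the upper bound, first bounding $x$'s neighbours in $\mathcal{S}_1(v)$ via the common neighbour $w$ and then using that $x$'s $m$ neighbours in $\mathcal{B}_1(y)$ all lie in $\mathcal{S}_2(v)\cup\mathcal{S}_3(v)$. The only cosmetic difference is that you isolate the forward-degree bound as a preliminary observation and account for the ``extra unit'' via $y\notin\mathcal{S}_1(y)$ rather than via disjointness of $\mathcal{B}_1(y)$ from $\mathcal{S}_1(v)$, which amounts to the same count.
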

\begin{proof}Fix $d=3k-1$, so that $\floor{2d/3}-1=2k-2$, choose $n\geq 6$ arbitrarily, and let $G$ be the graph obtained by blowing up each vertex of $C_n$ to a clique of order $k$. Then $G$ is $(3k-1)$-regular and vertex-transitive with diameter $\floor{n/2}\geq 3$. For each $v$ and $r$, the graph $\overline{\mathcal{L}}_r(v)$ consists of the cliques corresponding to $\min\{2r+1,n\}$ consecutive vertices of the cycle, and so has minimum degree at least $2k-1=(2d-1)/3$.

Suppose $G$ is $d$-regular with diameter at least $3$ and satisfies $\min_{v,r}\delta(\overline{\mathcal{L}}_r(v))=m$. Choose vertices $v,y$ with $d_G(v,y)=3$, and let $vwxy$ be a shortest path between them. Since $w$ has at least $m$ neighbours in $\mathcal{B}_1(v)$, it has at most $d-m$ neighbours in $\mathcal{S}_2(v)$. Since $w$ also has at least $m$ neighbours in $\mathcal{B}_1(x)$, it has at least $2m-d$ neighbours in $\mathcal{B}_1(x)\setminus\mathcal{S}_2(v)$, which must all be in $\mathcal{S}_1(v)$. It follows that $x$ has at least $2m-d+1$ neighbours in $\mathcal{S}_1(v)$. However, $x$ also has at least $m$ neighbours in $\mathcal{B}_1(y)$, which is disjoint from $\mathcal{S}_1(v)$. Thus $3m-d+1\leq\deg(x)\leq d$, and so $m\leq (2d-1)/3$.
\end{proof}

\section{A conjecture}

The results in Section \ref{sec:sphere} are in spirit indicating that spheres are not ``too connected''.
In this section we give a conjecture regarding $r$-links of infinite graphs in the same spirit.

A finite graph $G$ is said to have \textit{expansion} $h$ for
\[
h = \inf_{S \subset V_G :  \,\, 0 < \abs{S} \leq \abs{G}/2} \frac{\abs{\partial S}}{\abs{S}},
\]
where $V_{G}$ are the vertices of $G$ and $\partial S$ is the outer vertex boundary of $S$.

An \textit{expander family} is a sequence of graphs such that, for some $h > 0$, all graphs in the sequence have expansion at least $h$.

\begin{conj}\label{conjecture}There is no sequence of bounded-degree finite graphs, with size growing to infinity,
such that all links in all the graphs form an expander family.
\end{conj}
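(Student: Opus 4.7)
The approach is to argue by contradiction and reduce, via a Benjamini--Schramm (local weak) limiting argument, to an infinite-graph statement. If the conjecture fails, there is a sequence $(G_n)$ of graphs of maximum degree $d$ with $|G_n|\to\infty$ such that every link in every $G_n$ is an $h$-expander. For any fixed $R$, the property ``every $r$-link at the root is $h$-expanding for $r\leq R$'' depends only on the ball of radius $R$ around the root. Passing to a subsequential local limit and diagonalising over $R$, one obtains a unimodular random rooted infinite graph $(G,o)$ of maximum degree $d$ in which, almost surely, every link is an $h$-expander. The task thus reduces to proving that no such infinite $G$ exists.

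To attack the infinite case, I would exploit the tension between link expansion and the ambient bounded degree. Any $h$-expander of maximum degree $d$ has diameter $O_{d,h}(\log|\mathcal{S}_r(v)|)$, and since the link is an induced subgraph, this bounds the graph distance within $\mathcal{S}_r(v)$ in $G$. Building on the inductive scheme of Theorem~\ref{thm:sphere-diam}, one can try to show that the local intersection structure $\mathcal{S}_{r-1}(v)\cap\mathcal{S}_1(u)$ for $u\in\mathcal{S}_r(v)$ is tightly constrained, yielding a recursive lower bound on sphere growth. The target is a dichotomy: either $|\mathcal{S}_r(v)|$ grows near the maximal rate $(d-1)^r$, forcing $G$ to be locally tree-like so that links become nearly independent sets (contradicting expansion), or the growth is strictly slower, forcing $G$ to have multiple ends so that large spheres disconnect (again contradicting expansion).

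The main obstacle is making this dichotomy rigorous and quantitative. Every bounded-degree infinite graph I can produce fails to have uniformly expanding links for one of those two reasons: free groups and Benjamini--Schramm limits of random regular graphs give tree-like local structure and hence independent-set spheres; $\mathbb{Z}^d$ gives sphere boundaries that look like cycles with poor Cheeger constants; and the cycle blow-ups of Section~\ref{sec:sphere} give spheres that split into cliques with no edges between components. Turning this empirical catalogue into a theorem will likely require tools from the theory of ends, polynomial growth dichotomies \`a la Gromov, and a careful quantitative Cheeger analysis relating the expansion of $\mathcal{L}_r(v)$ to the bipartite graph between $\mathcal{S}_{r-1}(v)$ and $\mathcal{S}_r(v)$. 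A natural first test case would be to prove the conjecture for Cayley graphs of finitely generated groups, where the algebraic rigidity should sharpen the dichotomy considerably.
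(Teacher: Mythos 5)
The statement you are addressing is a \emph{conjecture}: the paper does not prove it, and offers only a heuristic (a Benjamini--Schramm limit reduction via Lemma~\ref{lemma1}, followed by an argument through non-amenability, positive speed, and a hoped-for extension of the Benjamini--Kozma ``expander levels imply Liouville'' theorem to unimodular random graphs, which the authors explicitly state they cannot prove). Your proposal is likewise not a proof, and you are candid about this; but it is worth separating the part that matches the paper from the part that is genuinely broken. Your first paragraph --- pass to a subsequential local weak limit and reduce to ruling out an infinite unimodular random graph all of whose links are uniform $h$-expanders --- is essentially the same reduction the paper makes, and is sound (each link of radius $r\leq R$ at the root is a finite graph determined by the ball of radius $R$, so uniform expansion of links is closed under local limits).

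The genuine gap is in your proposed dichotomy, and your own list of examples refutes it. You claim that either $\abs{\mathcal{S}_r(v)}$ grows at near-maximal rate $(d-1)^r$ (forcing near-independent-set spheres) or growth is strictly slower, ``forcing $G$ to have multiple ends so that large spheres disconnect.'' Sub-maximal growth does not force multiple ends: $\mathbb{Z}^k$ has polynomial growth and one end, its spheres are connected, and they fail to expand for a third reason entirely (they are discrete analogues of Euclidean spheres, whose Cheeger constants decay with $r$); one-ended hyperbolic groups similarly have exponential but sub-maximal growth with connected spheres. So the second horn of the dichotomy is false as stated, and the cases you would actually need to handle --- one-ended graphs of intermediate or polynomial growth with connected spheres --- are exactly the ones it misses. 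By contrast, the paper's heuristic routes around local geometry altogether: Lemma~\ref{lemma1} extracts non-amenability (hence positive speed) from link expansion, and the intended contradiction is with a Liouville property that link expansion should also force; the missing ingredient there is a single identified theorem (the unimodular analogue of \cite{BK}), whereas your plan is missing its central case analysis. If you want a tractable first step, your suggestion of restricting to Cayley graphs is reasonable, but you should replace the growth dichotomy with an argument that applies to one-ended groups of arbitrary growth.
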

Note that size growing to infinity is equivalent to diameter growing to infinity. In the extremal examples discussed in Section \ref{sec:sphere} (other than the example with diameter $2$), the link graphs do not expand, since they are disconnected (and for the example with logarithmic diameter, typically have many components).

In what follows, we provide some heuristic support for Conjecture \ref{conjecture}. Recall that an infinite graph $G$ is said to be \textit{amenable}
if 
\[
\inf_{S \subset V_G :  \,\, 0 < \abs{S} <\infty} \frac{\abs{\partial S}}{\abs{S}}=0.
\]
\begin{lemma}\label{lemma1}
Assume $G$ is an infinite graph, and all the $r$-links of $G$ form an expander family.
Then $G$ is non-amenable.
\end{lemma}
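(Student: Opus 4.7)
The plan is to prove a linear isoperimetric inequality on $G$: for every finite $F\subset V(G)$, $|\partial F|\ge c|F|$ with $c>0$ depending only on the common expansion constant $h$ of the links. By definition of amenability, this gives the conclusion.

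The main step is a spherical slicing argument. For finite $F$ and an auxiliary vertex $v\notin F$ (which exists because $G$ is infinite and $F$ is finite), set $F_r:=F\cap\mathcal{S}_r(v)$; these layers partition $F$. Suppose first that $|F_r|\le|\mathcal{S}_r(v)|/2$ for every $r$. Then the $h$-expansion of $\mathcal{L}_r(v)$ produces at least $h|F_r|$ vertices of $\mathcal{S}_r(v)\setminus F_r$ adjacent to $F_r$ in $G$. Each such vertex lies in $V(G)\setminus F$ and has a $G$-neighbour in $F$, so lies in $\partial F$. Because the spheres about $v$ are pairwise disjoint, these contributions are disjoint across $r$, and summing gives $|\partial F|\ge h|F|$, as required.

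The main obstacle is producing a vertex $v$ with this layer-to-sphere ratio property at every radius. The natural attempt is to choose $v$ far from $F$ so that every sphere meeting $F$ has at least $2|F|$ vertices; this reduces to a sphere-growth statement, $|\mathcal{S}_r(v)|\to\infty$ as $r\to\infty$. Such growth is not immediate from the link hypothesis alone, since the expansion of $\mathcal{L}_r(v)$ combined with having at least one neighbour in $\mathcal{S}_{r-1}(v)$ only gives the upper bound that each vertex of $\mathcal{S}_r(v)$ has at most $d-h-1$ neighbours in $\mathcal{S}_{r+1}(v)$. Forcing the spheres genuinely to grow will require combining the link-expansion estimates at several consecutive radii, perhaps by using the connectedness of each link to propagate the growth and bootstrap from the fact that every link has at least $h+1$ vertices.

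If a good $v$ cannot be produced directly, I would fall back on an averaging argument that accommodates ``bad'' radii where $|F_r|>|\mathcal{S}_r(v)|/2$. At such a radius, link expansion applied to $\mathcal{S}_r(v)\setminus F_r$ instead contributes to the inner boundary $\partial^{\circ} F$, which is comparable to $\partial F$ since $G$ has bounded degree. Averaging the layered inequality over $v$ in a large ball around $F$ should then recover a linear isoperimetric estimate by trading off good and bad layers. In every version of the argument, the hardest step is the quantitative control of $|F_r|/|\mathcal{S}_r(v)|$, and this is where I expect the main technical effort to concentrate.
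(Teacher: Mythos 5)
Your slicing step is exactly the paper's argument: intersect the finite set $F$ with the spheres about a distant vertex $v$, apply the $h$-expansion of each link $\mathcal{L}_r(v)$ to the slice $F_r=F\cap\mathcal{S}_r(v)$, and note that the resulting boundary vertices lie in $\partial F$ and are disjoint across radii. You also correctly isolate the one nontrivial prerequisite, namely that each slice occupies at most half of its sphere, which one wants to force by choosing $v$ so far from $F$ that every sphere about $v$ meeting $F$ has more than $2\abs{F}$ vertices. But at that point you stop: you observe that sphere growth ``is not immediate,'' float the idea of bootstrapping it from degree estimates at consecutive radii, and offer an uncarried-out averaging fallback. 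That unproven sphere-growth statement is the actual content of the lemma's proof, so as written the proposal has a genuine gap rather than a complete argument.

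The paper closes this gap with a short qualitative argument that uses only the \emph{connectivity} of links (which follows from expansion), not quantitative degree counting. If the sphere sizes about some vertex did not tend to infinity, there would be infinitely many radii at which the sphere has bounded size; each such sphere is a cutset separating that vertex from infinity. Taking a vertex $u$ midway between two such remote cuts, at distance $R_0$ from each, the sphere $\mathcal{S}_{R_0}(u)$ meets the regions near both cuts but is disconnected, and a disconnected graph is not an expander --- contradiction. Your proposed alternative route is unlikely to succeed as stated: the bound that each vertex of $\mathcal{S}_r(v)$ has at most $d-h-1$ neighbours in $\mathcal{S}_{r+1}(v)$ points the wrong way (it caps growth rather than forcing it), and stacking such estimates over consecutive radii gives no lower bound on $\abs{\mathcal{S}_r(v)}$. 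The key realisation you are missing is that one should \emph{change the centre}: boundedness of spheres about one vertex produces a badly disconnected sphere about a different, well-chosen vertex, and it is there that the expansion hypothesis is violated.
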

\begin{proof}
First we show that, for any vertex $v$, the size of $\mathcal{S}_r(v)$ grows to infinity with $r$. Indeed, if there is some $a$ for which $\abs{\mathcal{S}_r(v)}\leq a$ infinitely often, there is in particular some $r>a$ with this property, and $\mathcal{S}_r(v)$ is a cutset separating  $\mathcal{B}_{r-1}(v)$ from infinity. Let $w$ be a vertex which lies on an infinite ray proceeding from $v$, at distance $s>r$. Now there are $r>a$ distances in the set $\{s-r+1,\ldots,s\}$, and so at least one of $\mathcal{S}_{s-r+1}(w),\ldots,\mathcal{S}_s(w)$ must fail to intersect $\mathcal{S}_r(v)$. Since each of these spheres intersects both $\mathcal{B}_{r-1}(v)$ and its complement, at least one is disconnected and so not an expander.

Let $S$ be a finite set of vertices in $G$. Pick a vertex $v$ in $G$ far enough from $S$,
such that the $r$-links around $v$ that intersect $S$ have size larger than $2^{\abs{S}}$.
By uniform expansion of the $r$-links, in each $r$-link that intersects $S$, the intersection of $S$ with the $r$-link has proportional boundary, and
as the radius of the spheres varies they give a disjoint cover of $S$. Thus every finite set of vertices in $G$ has proportional boundary and thus $G$
is non-amenable.\end{proof}

The heuristic for Conjecture \ref{conjecture} follows the strategy of Salez \cite{S}.
By Lemma~\ref{lemma1}, the BS limit of graphs in which all links are expanders with expansion bounded away from zero is a.s.\ non-amenable.
A simple random walk on a non-amenable graph has positive linear speed.
The BS limit is a unimodular random graph (see \cite{AL, BS} for definitions)
in which all the $r$-links of the root are an expander family.
In \cite{BK} it was proved that if you add edges so that the vertices of each of the
levels of a binary tree (\ie each of the links of the root) form a sequence of uniform expanders, then the resulting graph has no non-constant bounded harmonic functions.
We \emph{believe}, (but cannot prove) that the same holds for unimodular random graphs, \ie if the links of the root of a unimodular random graph form an expander family then it has no non-constant bounded harmonic functions.
Since this is equivalent to having zero speed \cite{BC}, it would lead to a contradiction.

\medskip
Our result in Section \ref{sec:ball} equally relates to a similar question. In \cite{B} it was asked: Is there is a sequence of finite
bounded-degree graphs growing in size to infinity, so that all the induced
balls in all the graphs in the sequence form an expander family?
For related results on heat kernel supports see \cite{FV}.

\section*{Acknowledgements}
JH was supported by the UK Research and Innovation Future Leaders Fellowship number
MR/S016325/1.

\end{document}